\newtheorem{theorem}{Theorem}[section]
\newtheorem{lemma}[theorem]{Lemma}
\newtheorem{proposition}[theorem]{Proposition}
\newtheorem*{theorem*}{Theorem}
\theoremstyle{plain}
\newtheorem{corollary}[theorem]{Corollary}
\theoremstyle{definition} 
\newtheorem{definition}[theorem]{Definition}
\newtheorem{definition-lemma}[theorem]{Definition-Lemma}
\newtheorem{example}[theorem]{Example}
\numberwithin{equation}{section}
\newcommand{\C}{\mathbb{C}}
\newcommand{\R}{\mathbb{R}}
\newcommand{\Q}{\mathbb{Q}}
\DeclareMathOperator{\codim}{codim}
\newcommand{\bm}{\mathbf B_-}  
\newcommand{\B}{\mathbf B}
\newcommand{\bp}{\mathbf B_+}  
\def\pnklt{\operatorname{pNklt}}
\def\gnklt{\operatorname{gNklt}}
\def\mult{\operatorname{mult}}
\def\Supp{\operatorname{Supp}}
\def\Exc{\operatorname{Exc}}
\def\NE{\operatorname{NE}}
\def\codim{\operatorname{codim}}
\def\pnklt{\operatorname{pNklt}}
\def\nklt{\operatorname{Nklt}}
\def\nnef{\operatorname{NNef}}
\newcommand{\pp}{P_{\sigma}}
\newcommand{\np}{N_{\sigma}}
\DeclarePairedDelimiterX{\norm}[1]{\lVert}{\rVert}{#1}
\newcommand{\coleq}{\vcentcolon=}
\title[On minimal model program and Zariski decomposition of potential triples]
{On minimal model program and Zariski decomposition of potential triples}
\begin{document}

\author[S.~Choi]{Sung Rak Choi}
\author[S.~Jang]{Sungwook Jang}
\author[D.-W.~Lee]{Dae-Won Lee}
\address{Department of Mathematics, Yonsei University, 50 Yonsei-ro, Seodaemun-gu, Seoul 03722, Republic of Korea}
\email{sungrakc@yonsei.ac.kr}
\address{Center for Complex Geometry, Institute for Basic Science, 34126 Daejeon, Republic of Korea}
\email{swjang@ibs.re.kr}
\address{Department of Mathematics, Ewha Womans University, 52 Ewhayeodae-gil, Seodaemun-gu, Seoul 03760, Republic of Korea}
\email{daewonlee@ewha.ac.kr }

\thanks{The authors are partially supported by Samsung Science and Technology Foundation under Project Number SSTF-BA2302-03. D. Lee is partially supported by Basic Science Research Program through the National Research Foundation of Korea (NRF) funded by the Ministry of Education (No. RS-2023-00237440).}

\subjclass[2010]{Primary 14E30; Secondary 14J17.}
\keywords{Potential triple, Generalized pair, Zariski decomposition, Minimal model program}

\begin{abstract}
  In this paper, we investigate properties of potential triples $(X,\Delta,D)$ which consists of a pair $(X,\Delta)$ and a pseudoeffective $\R$-Cartier divisor $D$. In particular, we  show that if $D$ admits a birational Zariski decomposition, then one can associate a generalized pair structure to the potential triple $(X,\Delta,D)$. Moreover, we can run the generalized MMP on $(K_X+\Delta+D)$ as special cases. As an application, we also show that for a pklt pair $(X,\Delta)$, if $-(K_X+\Delta)$ admits a birational Zariski decomposition with $\mathrm{NQC}$ positive part, then there exists a $-(K_X+\Delta)$-minimal model.
\end{abstract}

\maketitle



\section{Introduction}\label{sect:intro}

It is well known that understanding the geometry of the generalized pairs has been a driving force recently, especially in the field of birational geometry. In this paper, we explore the geometry of the generalized pairs in even more general settings. A potential triple $(X,\Delta,D)$ consists of a pair $(X,\Delta)$ equipped with another divisor $D$ which we assume to be $\R$-Cartier and only pseudoeffective. Recall that if $D$ is the trace of a b-nef divisor, then the triple coincides with the generalized pair and if $D=0$, then the triple is nothing but the usual pair. By defining the potential log discrepancy as the usual log discrepancy of $(X,\Delta)$ subtracted by the asymptotic divisorial valuation of $D$, we can extend the notions of singularities of pairs to define, for example, potentially klt, potentially lc and so on. The generalizations of log discrepancies as well as other notions associated to pairs also allow us to consider a variation of the minimal model program.  See Section \ref{sect:prelim}. 

The minimal model program (MMP for short) on the generalized pairs has been studied extensively in a series of papers \cite{CHLX},\cite{CT},\cite{HL},\cite{TX}, etc. Recently, by analyzing the algebraically integrable foliation structures on the generalized pairs, the MMP for the glc pairs has been established by \cite{CHLX}. In this paper, we aim to obtain some necessary conditions for the MMP on the potential triples. By definition, we require such MMP to be a birational contraction $\varphi\colon X\dashrightarrow Y$ which is $(K_X+\Delta+D)$-negative. The resulting model is also a potential triple $(Y,\Delta_Y,D_Y)$ which is not necessarily a generalized pair. Of course, it is expected that the map $\varphi$ is a composition of divisorial contractions and flips. If this is the case and once the proper transform of $D$ in an intermediate step becomes nef, then from then on, the rest of the steps in the MMP coincide with the MMP on the generalized pairs. We will elaborate on this circumstance in Section \ref{sect:prelim} and Theorem \ref{thm:run mmp}. The following is the main result of this paper.

\begin{theorem}\label{thm:main1}
  Let $(X,\Delta,D)$ be a potential triple such that $D$ admits a birational Zariski decomposition $f^{\ast}D=P+N$ for some birational morphism $f\colon Y\to X$. Then the followings hold:
  \begin{enumerate}[(1)]
    \item If $(X,\Delta,D)$ is a plc triple, then $\pnklt(X,\Delta,D)$ is Zariski closed.
    \item If $(X,\Delta,D)$ is a $\Q$-factorial plc triple, then one can run the $(K_X+\Delta+D)$-MMP.
  \end{enumerate}
\end{theorem}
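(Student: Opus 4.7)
The plan is to exploit the key observation flagged in the introduction: a birational Zariski decomposition $f^{\ast}D=P+N$ on some model $f\colon Y\to X$ allows us to attach a generalized pair structure to the potential triple. Concretely, on $Y$ the nef part $P$ becomes the trace on $Y$ of a b-nef b-divisor $\mathbf{P}$; setting $B\coloneqq \Delta+f_{\ast}N$ produces a generalized pair $(X,B;\mathbf{P})$ with
\[
K_X+\Delta+D=K_X+B+\mathbf{P}_X.
\]
First I would check that this identification is canonical enough for what follows, in particular that the assignment $E\mapsto \sigma_E(D)$ (asymptotic divisorial valuation) coincides with the coefficient of $E$ in the negative part of a sufficiently high birational Zariski decomposition; this uses standard properties of Nakayama's $\sigma$-decomposition relative to Zariski decompositions.

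For (1), the above identification gives, for every divisor $E$ over $X$, an equality between the potential log discrepancy $a(E;X,\Delta)-\sigma_E(D)$ and the generalized log discrepancy $a(E;X,B,\mathbf{P})$. Consequently the plc condition on $(X,\Delta,D)$ is equivalent to the glc condition on $(X,B;\mathbf{P})$, and the two non-klt loci $\pnklt(X,\Delta,D)$ and $\gnklt(X,B;\mathbf{P})$ coincide as sets. Since the gnklt locus of a glc generalized pair is Zariski closed by the standard argument (pull back to a log resolution where the discrepancy divisor has snc support and push down the relevant components), (1) follows.

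For (2), under the $\mathbb{Q}$-factorial plc hypothesis, the associated generalized pair is $\mathbb{Q}$-factorial glc, so the generalized cone, contraction, and flip theorems apply in the form established in \cite{CHLX}, \cite{HL}, \cite{TX}. Any extremal ray that is $(K_X+B+\mathbf{P}_X)$-negative is automatically $(K_X+\Delta+D)$-negative, so its contraction (divisorial, flipping, or fiber type) and, where needed, its flip serve simultaneously as a step of the $(K_X+\Delta+D)$-MMP. The remaining point is to verify that after such a step $\varphi\colon X\dashrightarrow X'$ the resulting triple $(X',\varphi_{\ast}\Delta,\varphi_{\ast}D)$ is again a $\mathbb{Q}$-factorial plc triple whose $D$-part admits a birational Zariski decomposition, so that the process can be iterated; this reduces to the fact that the proper transform of $P$ under a step of the MMP on $Y$ remains nef on a common resolution, and that $f_{\ast}N$ behaves well under pushforward.

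The main obstacle is precisely this last compatibility check: guaranteeing that at each stage of the MMP the birational Zariski decomposition persists on the new model so that the generalized pair structure can be inherited. All other ingredients either are a direct translation via the dictionary set up in the first paragraph or are invocations of results already available for generalized pairs; the genuinely new content is the bookkeeping that lets the generalized MMP be run inside the larger framework of potential triples.
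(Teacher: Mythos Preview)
Your approach is the same as the paper's: associate to $(X,\Delta,D)$ the generalized pair $(X,(\Delta+f_{\ast}N)+f_{\ast}P)$, check that the potential and generalized log discrepancies agree (this is the paper's Theorem~\ref{thm:generalized}), and then invoke \cite{CHLX} for both the closedness of the gnklt locus and the MMP.

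The one place where you diverge from the paper is the ``main obstacle'' you flag at the end, and here you are making life harder than necessary. The paper does \emph{not} iterate by re-establishing a birational Zariski decomposition of $\varphi_{\ast}D$ on each new model. Instead, once you know $K_X+\Delta+D=K_X+(\Delta+f_{\ast}N)+f_{\ast}P$ and that the right-hand side is the log canonical divisor of a $\Q$-factorial glc generalized pair, you simply run the generalized MMP on that fixed generalized pair. The b-nef divisor $\mathbf{P}$ is carried along automatically by the generalized MMP machinery of \cite{CHLX}; every step is $(K_X+(\Delta+f_{\ast}N)+f_{\ast}P)$-negative, hence $(K_X+\Delta+D)$-negative, and that is already the definition of a $(K_X+\Delta+D)$-MMP. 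No compatibility check about Zariski decompositions on intermediate models is required. (That the intermediate potential triples remain plc is a separate observation, handled by Lemma~\ref{lem:pot discrep}, but it is not needed to \emph{run} the MMP.)
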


In the proof, we will use the fact that we can associate a generalized pair $(X,(\Delta+f_{\ast}N)+f_{\ast}P)$ if $D$ admits a birational Zariski decomposition as in Theorem \ref{thm:main1}. For the definitions of potentially non-klt locus $\pnklt(X,\Delta,D)$ and  $(K_X+\Delta+D)$-MMP, see Section \ref{sect:prelim}.
If $D=-(K_X+\Delta)$ in a potential triple $(X,\Delta,D)$, then we just call $(X,\Delta)$ a potential pair.

As an application of Theorem \ref{thm:main1}, we obtain the following result.

\begin{corollary}[cf. {\cite[Theorem 1.3]{Jan}}]\label{cor:main1}
  Let $(X,\Delta)$ be a $\Q$-factorial pklt pair such that $-(K_X+\Delta)$ admits a birational Zariski decomposition with $\mathrm{NQC}$ positive part. Then there exists a $-(K_X+\Delta)$-minimal model.
\end{corollary}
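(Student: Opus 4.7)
The plan is to reduce the corollary to Theorem \ref{thm:main1}(2). Set $D \coleq -(K_X+\Delta)$; by hypothesis $D$ is pseudoeffective and admits a birational Zariski decomposition $f^{\ast}D = P + N$ with $P$ NQC. Applying Theorem \ref{thm:main1}(2) directly to the triple $(X,\Delta,D)$ would be useless, since $K_X+\Delta+D = 0$ makes the induced MMP vacuous. Instead, I would work with the slightly enlarged potential triple $(X,\Delta,cD)$ for some constant $c>1$, using the identity
\[
K_X + \Delta + cD \;=\; (c-1)D,
\]
so that the $(K_X+\Delta+cD)$-MMP is a positive multiple of a $D$-MMP, and its output should exhibit the proper transform of $D$ as nef on the resulting model.

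The first step is to choose $c>1$ for which $(X,\Delta,cD)$ remains a $\Q$-factorial plc potential triple. Because $f^{\ast}D = P+N$ is a birational Zariski decomposition, the divisorial valuations $E$ with $\sigma_E(D) > 0$ are precisely the finitely many components of $N$ (exceptional valuations over $Y$ contribute zero, since pulling back the decomposition adds no new support to the negative part). The pklt hypothesis gives $a(E,X,\Delta) > \sigma_E(D)$ strictly at these finitely many valuations, hence uniform slack: setting $c_0 \coleq \min_E a(E,X,\Delta)/\sigma_E(D) > 1$, any $c \in (1,c_0)$ makes $(X,\Delta,cD)$ pklt and in particular plc. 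The birational Zariski decomposition rescales as $f^{\ast}(cD) = cP + cN$, with positive part $cP$ still NQC.

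I would then apply Theorem \ref{thm:main1}(2) to $(X,\Delta,cD)$ to run the $(K_X+\Delta+cD)$-MMP. Assuming termination, the output is a birational contraction $\varphi\colon X\dashrightarrow Y$ on which the proper transform of $(c-1)D$, hence of $D$ itself, is nef. Since $\varphi_{\ast}D = -(K_Y+\varphi_{\ast}\Delta)$, this yields the desired $-(K_X+\Delta)$-minimal model.

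I expect termination to be the main obstacle, as Theorem \ref{thm:main1}(2) only asserts that the MMP can be run. This is where the NQC hypothesis is essential. The birational Zariski decomposition yields the associated generalized pair $(X,(\Delta+f_{\ast}N)+f_{\ast}P)$ indicated after Theorem \ref{thm:main1}, and the NQC condition on $P$ (preserved under scaling) makes this an NQC generalized pair. The $(K_X+\Delta+cD)$-MMP then translates into a generalized MMP for which termination toward a minimal model is known in the NQC setting, for instance via the results of \cite{CHLX}.
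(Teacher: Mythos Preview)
Your overall strategy---replace $D=-(K_X+\Delta)$ by $cD$ for some $c>1$, observe $K_X+\Delta+cD=(c-1)D$, and run the resulting MMP---is exactly what the paper does. However, two steps in your execution are not justified as written.

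\textbf{The pklt persistence argument is flawed.} Your claim that the valuations $E$ with $\sigma_E(D)>0$ are precisely the finitely many components of $N$ is false: if $g\colon Z\to Y$ is a further blowup and $F$ is a $g$-exceptional divisor with center in $\Supp N$, then $\sigma_F(D)=\mult_F(g^\ast N)>0$, since the Zariski decomposition pulls back as $g^\ast f^\ast D=g^\ast P+g^\ast N$. So there are infinitely many such valuations, and your minimum $c_0$ over components of $N$ does not control them. The paper avoids this entirely by passing through Theorem~\ref{thm:generalized}: pklt for $(X,\Delta,D)$ is equivalent to gklt for the generalized pair $(X,(\Delta+f_\ast N)+f_\ast P)$, and gklt is checked by finitely many strict inequalities on a fixed log resolution, hence is open under scaling $N\mapsto (1+\varepsilon)N$, $P\mapsto (1+\varepsilon)P$. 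Applying Theorem~\ref{thm:generalized} in the reverse direction then gives that $(X,\Delta,(1+\varepsilon)D)$ is pklt.

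\textbf{The termination input is wrong.} Theorem~\ref{thm:CHLX} (the result from \cite{CHLX} quoted in the paper) only says one can run the MMP; it explicitly allows an infinite sequence of flips. The paper instead invokes \cite[Theorem 5.18]{TX}, which gives existence of a minimal model for an NQC gklt generalized pair whose generalized log canonical class admits a birational Zariski decomposition with NQC positive part. You should verify that $K_X+\Delta+cD\equiv (c-1)D$ indeed has such a decomposition (namely $(c-1)P+(c-1)N$ on $Y$) and cite \cite{TX} rather than \cite{CHLX}.
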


If the augmented base locus $\bp(D)$ does not contain any plc centers of $(X,\Delta,D)$, then we can run the $(K_X+\Delta+D)$-MMP.

\begin{theorem}\label{thm:main2}
Let $(X,\Delta,D)$ be a $\Q$-factorial plc triple with a big $\Q$-Cartier divisor $D$. If no plc centers of $(X,\Delta,D)$ are contained in $\bp(D)$, then there exists an effective divisor $D'\sim_{\Q} D$ such that $(X,\Delta+D')$ is an lc pair and $\pnklt(X,\Delta,D)=\nklt(X,\Delta+D')$. Moreover, we can run the $(K_X+\Delta+D)$-MMP.
\end{theorem}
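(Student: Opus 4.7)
The plan is to reduce the $(K_X+\Delta+D)$-MMP to an ordinary MMP on a $\Q$-factorial lc pair. More precisely, I will construct an effective $\Q$-divisor $D'\sim_{\Q}D$ with the property that $(X,\Delta+D')$ is lc and $\nklt(X,\Delta+D')=\pnklt(X,\Delta,D)$. Once this is achieved, the MMP assertion follows at once: running a $(K_X+\Delta+D')$-MMP with scaling of an ample divisor on the $\Q$-factorial lc pair $(X,\Delta+D')$ via the standard cone, contraction, and flip theorems for lc pairs produces the desired $(K_X+\Delta+D)$-MMP, since $K_X+\Delta+D'\sim_{\Q}K_X+\Delta+D$ and the potential-triple structure is transported step-by-step via the mechanism recorded in Section~\ref{sect:prelim} and Theorem~\ref{thm:main1}.

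The construction of $D'$ is the heart of the argument. By Kodaira's lemma applied to the big divisor $D$, write $D\sim_{\Q}\epsilon A+F$ with $A$ ample, $F$ effective, and $\epsilon>0$ arbitrarily small. Fix a log resolution $f\colon Y\to X$ of $(X,\Delta+F)$ that extracts every prime divisor $E$ realising $a(E;X,\Delta)=\sigma_E(f^{\ast}D)$; on such a fixed resolution there are only finitely many such $E$, and their centers on $X$ are exactly the plc centers of the triple. By hypothesis each such center is not contained in $\bp(D)$, so on an open neighborhood $U$ of $\pnklt(X,\Delta,D)$ the restricted asymptotic order of vanishing $\sigma_E(f^{\ast}D)$ is realised by an honest effective member of $|mf^{\ast}D|_{\Q}$ for some sufficiently divisible $m$. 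Combining such a member with a general representative of $\epsilon A$ chosen by Bertini so as to avoid every plc center, and rescaling, yields an effective $D'\sim_{\Q}D$ with $\mult_E(f^{\ast}D')=\sigma_E(f^{\ast}D)=a(E;X,\Delta)$ along every plc place $E$ and $\mult_{E'}(f^{\ast}D')<a(E';X,\Delta)$ along every other prime divisor $E'$, provided $\epsilon$ is chosen small enough. This simultaneously gives both the lcness of $(X,\Delta+D')$ and the equality of non-klt loci.

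The main technical obstacle is precisely the simultaneous matching of multiplicities: one must hit the log-discrepancy threshold exactly at each of the finitely many plc places while staying strictly below it on every other divisor. The assumption that no plc center lies in $\bp(D)$ is essential here, because it upgrades $\sigma_E(f^{\ast}D)$ along plc places from a mere asymptotic infimum to a value realised by a concrete $\Q$-linearly-equivalent representative; this is what promotes the automatic inclusion $\pnklt(X,\Delta,D)\subset\nklt(X,\Delta+D')$ to the desired \emph{equality}. The ampleness of $A$ combined with Bertini then provides the flexibility to perturb away any accidental lc centers introduced by the construction and to preserve the $\Q$-factorial lc structure globally on $X$.
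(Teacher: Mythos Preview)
Your overall plan---construct an effective $D'\sim_{\Q}D$ with $(X,\Delta+D')$ lc and $\nklt(X,\Delta+D')=\pnklt(X,\Delta,D)$, then run the ordinary lc MMP on $K_X+\Delta+D'$---is exactly the paper's approach. But you miss the one observation that makes the construction of $D'$ straightforward, and as a result the ``main technical obstacle'' you describe is a phantom.

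The paper first notes that the hypothesis forces $\pnklt(X,\Delta,D)=\nklt(X,\Delta)$. Indeed, one always has $\nklt(X,\Delta)\subseteq\pnklt(X,\Delta,D)\subseteq\nklt(X,\Delta)\cup\nnef(D)$ and $\nnef(D)\subseteq\bp(D)$; so if no plc center lies in $\bp(D)$, every plc center is already an lc center of $(X,\Delta)$. In particular, for every plc place $E$ we have $a(E;X,\Delta)=0$ \emph{and} $\sigma_E(D)=0$. There is therefore nothing to ``hit exactly'': one only needs a $D'\sim_{\Q}D$ general enough that it contributes no multiplicity along the finitely many lc places and is sufficiently small elsewhere, which is standard for a big $\Q$-divisor whose augmented base locus avoids the lc centers (the paper invokes \cite[Proof of Proposition~4.9]{CJK} for this).

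Your version instead attempts to realise possibly positive values $\sigma_E(f^\ast D)$ simultaneously and exactly at all plc places by a single $\Q$-linearly equivalent divisor. That is genuinely delicate---these infima need not be achieved, let alone by a $\Q$-divisor, let alone simultaneously---and your sketch does not justify it. Once you insert the observation above, the difficulty disappears and the rest of your outline is fine. One minor correction: the transport of the potential-triple structure along the MMP is governed by Lemma~\ref{lem:pot discrep}, not Theorem~\ref{thm:main1}.
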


We recall some related results. If $(X,\Delta)$ is an lc pair and $D=K_X+\Delta$ is a big divisor such that $\bp(D)$ does not contain any lc centers of $(X,\Delta)$, then the main result of \cite{BH} tells us that there exists a good minimal model of $(X,\Delta)$. If $D=-(K_X+\Delta)$ is big and satisfies the conditions in Theorem \ref{thm:main2}, then by \cite{CJL} there exists a good $-(K_X+\Delta)$-minimal model. The condition in Theorem \ref{thm:main2} implies that for any sufficiently small $\varepsilon>0$, $(X,\Delta,(1+\varepsilon)D)$ is also plc. Furthermore, by \cite[Lemma 4.6]{CJK} we have $\pnklt(X,\Delta,D)=\pnklt(X,\Delta,(1+\varepsilon)D)$. Thus philosophically, the $-(K_X+\Delta)$-MMP to the good $-(K_X+\Delta)$-minimal model from $X$ is supposedly the birational map given by the $(K_X+\Delta+(1+\varepsilon)D)$-MMP in Theorem \ref{thm:main2}.  Note that such MMP coincides with $-(K_X+\Delta)$-MMP. At this moment, however, the existence of $(K_X+\Delta+D)$-minimal model for general $D$ is unknown and yet to be studied further.

The rest of this paper is organized as follows. In Section \ref{sect:prelim}, we recall the definitions and preliminary results regarding generalized pairs, minimal model program, Zariski decomposition, potential triples and asymptotic base loci. In Section \ref{sect:main}, we prove several properties on potential triples $(X,\Delta,D)$ with $D$ admitting a birational Zariski decomposition. In particular, we compare potential triples with generalized pairs. Also, we prove the main theorems, Theorem \ref{thm:main1} and Theorem \ref{thm:main2}.

\section*{Acknowledgement}
We would like to thank Donghyeon Kim for pointing out a mistake in the earlier version of this paper.

\section{Preliminaries}\label{sect:prelim}

Throughout the paper, we work over the field $\C$ of complex numbers.

\subsection{Generalized pairs}

A \textit{generalized pair} $(X,B+M)$ consists of
\begin{enumerate}[(1)]
  \item a normal projective variety $X$,
  \item an effective $\R$-divisor $B$ on $X$, and
  \item a projective morphism $f\colon Y\rightarrow X$ from a normal variety $Y$ and a nef $\R$-Cartier divisor $M_Y$ on $Y$ such that $M\coleq f_{\ast}M_Y$ and $K_X+B+M$ is $\R$-Cartier.
\end{enumerate}

Note that if $M_Y=M=0$, then this notion coincides with the usual pair and we drop the word ``generalized'' or the prefix ``g''.

\begin{definition}
  Let $(X,B+M)$ be a generalized pair with data $f\colon Y\rightarrow X$ and $E$ a prime divisor over $X$. Replacing $Y$ by a higher birational model if necessary, we may assume that $E$ is a divisor on $Y$. If we write $K_Y+B_Y+M_Y=f^{\ast}(K_X+B+M)$ for some $\R$-divisor $B_Y$ on $Y$, then the \textit{generalized log discrepancy} $a(E;X,B+M)$ of $E$ with respect to $(X,B+M)$ is defined as
  \begin{align*}
    a(E;X,B+M)\coleq 1-\mult_E B_Y.
  \end{align*}
We say that a generalized pair $(X,B+M)$ is a \textit{gklt} (resp. \textit{glc}) pair if $a(E;X,B+M)>0$ (resp. $\geq 0$) holds for any prime divisor $E$ over $X$. For a prime divisor $E$ over $X$, the \textit{center} $C_X(E)$ is the image of the prime divisor $E$ on $X$. The \textit{generalized non-klt locus} $\gnklt(X,B+M)$ of a generalized pair $(X,B+M)$ is defined as
  \begin{align*}
    \gnklt(X,B+M)\coleq \bigcup_E C_X(E),
  \end{align*}
where the union is taken over all prime divisors $E$ over $X$ such that $a(E;X,B+M)\leq 0$. For a glc pair $(X,B+M)$, we call such $C_X(E)$ the \textit{glc center} of $(X,B+M)$.
\end{definition}

\subsection{Minimal model program for generalized pairs}

Let $X$ be a normal projective variety and $D$ a pseudoeffective $\R$-Cartier divisor on $X$. A birational contraction $\varphi\colon X\dashrightarrow Y$ is called \textit{$D$-nonpositive} if $\varphi_*D$ is an $\R$-Cartier divisor on a normal projective variety $Y$, and there is a common resolution $(p,q)\colon Z\rightarrow X\times Y$ such that
	\begin{align*}
		p^\ast D=q^\ast \varphi_\ast D+E,
	\end{align*}
where $E$ is an effective $q$-exceptional divisor. The map $\varphi$ is called \textit{$D$-negative} if additionally, $\Supp(E)$ contains all the strict transforms of the $\varphi$-exceptional divisors.

A typical $D$-negative contraction is the map given by the \textit{$D$-minimal model program ($D$-MMP for short)}.

\begin{definition}
  Let $X$ be a normal projective variety and $D$ a pseudoeffective $\R$-Cartier divisor on $X$. A birational map $\varphi\colon X\dashrightarrow Y$ is called a $D$-MMP or $D$-minimal model if $\varphi$ is a $D$-negative contraction and $\varphi_{\ast}D$ is nef on $Y$.
\end{definition}

Let us recall the notion of $D$-minimal model program. Let $X$ be a $\Q$-factorial normal projective variety. Let $\varphi\colon X\rightarrow Y$ be a birational contraction morphism with $\rho(X/Y)=1$. If the exceptional locus $\Exc(\varphi)$ is a prime divisor, then the map $\varphi$ is called a \textit{divisorial contraction}. If $\codim\Exc(\varphi)\geq 2$, then $\varphi$ is called a \textit{small contraction}.

Let $\overline{\mathrm{NE}}(X)$ be the Mori cone of $X$ and $R$ a $D$-negative extremal ray of $\overline{\mathrm{NE}}(X)$. We say that a contraction morphism $\varphi\colon X\rightarrow Y$ is a \textit{$D$-negative extremal contraction} if $\rho(X/Y)=1$ and $\varphi(C)=\mathrm{pt}$ for all curves $C$ such that $R=\R_{\geq0}[C]$. Suppose that $\varphi\colon X\rightarrow Y$ is a small $D$-negative extremal contraction. Then $\varphi$ is called a \textit{$D$-flipping contraction}. Moreover, we can define a \textit{$D$-flip} $\varphi^{+}\colon X^{+}\rightarrow Y$ of a flipping contraction $\varphi\colon X\rightarrow Y$. For the definition of $D$-flip, we refer to \cite[Definition 2.3]{Bir}.

The $D$-MMP is a sequence of birational $D$-negative contractions;
\begin{align*}
  X\coleq X_1\overset{f_1}{\dashrightarrow} X_2\overset{f_2}{\dashrightarrow} X_3\overset{f_3}{\dashrightarrow} \cdots
\end{align*}
where each $f_{i}\colon X_{i}\dashrightarrow X_{i+1}$ is either a divisorial contraction or a flip. The goal of the $D$-MMP is to find either a $D$-minimal model or a $D$-Mori fibre space, i.e., $f\colon X\rightarrow Y$ is a contraction morphism with $\rho(X/Y)=1$, $\dim Y<\dim X$ and $-D$ is $f$-ample.

We say \textit{MMP for $(X,B+M)$} or \textit{MMP on $(K_X+B+M)$} interchangeably to mean \textit{$(K_X+B+M)$-MMP}. Since the MMP for glc generalized pairs is established in \cite{CHLX}, we can run the MMP on any $\Q$-factorial glc pair $(X,B+M)$.

\begin{theorem}\cite[Theorem 2.2.3]{CHLX}\label{thm:CHLX}
  Let $(X,B+M)$ be a $\Q$-factorial glc pair. Then there exists a sequence of $(K_X+B+M)$-negative contractions $f\colon X\dashrightarrow Y$ which consists of divisorial contractions and flipping contractions. Moreover, this sequence terminates with a Mori fibre space or a minimal model, or the sequence is eventually an infinite sequence of flips.
\end{theorem}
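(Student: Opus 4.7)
The plan is to mimic the standard MMP strategy, carried out in the glc generalized setting. First, I would establish the cone and contraction theorems for a $\Q$-factorial glc pair $(X,B+M)$: the Mori cone $\overline{\NE}(X)$ admits a decomposition in which the $(K_X+B+M)$-negative part is generated by countably many discrete extremal rays of bounded length, and each such extremal ray $R$ admits a contraction morphism $\varphi_R\colon X\to Y$ with $\rho(X/Y)=1$. These generalizations of the Kawamata--Shokurov theorems, established first in the gklt and later in the glc setting, were developed incrementally in prior work and serve as the technical starting point.

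Next, given a $(K_X+B+M)$-negative extremal contraction $\varphi_R$, I would analyze the three standard cases. If $\dim Y<\dim X$, then $\varphi_R$ is the desired Mori fibre space. If $\varphi_R$ is divisorial, then $(Y,\varphi_{R\ast}B+\varphi_{R\ast}M)$ inherits the structure of a $\Q$-factorial glc pair, and the procedure continues. The critical case is when $\varphi_R$ is a flipping contraction, in which case one must construct the $(K_X+B+M)$-flip $\varphi^{+}\colon X^{+}\to Y$ and verify that the resulting model is again a $\Q$-factorial glc pair with the same dimension and relative Picard number, so that the process can be repeated.

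The main obstacle, as in every MMP-type theorem, is the existence of flips. In the klt case the standard route is finite generation of the relative log canonical algebra \`a la BCHM, but extending this to glc pairs is significantly more delicate because the nef b-divisor $M$ contributes to the singularities only transcendentally. The approach of \cite{CHLX} is to attach to the generalized pair an algebraically integrable foliation built from the fibration data giving rise to the b-nef divisor $M_Y$, and then to invoke the recently developed MMP machinery for such foliations, combined with induction on dimension, to produce the flip. Once existence of flips is available, iterating the three-case analysis produces the sequence asserted in the theorem: at each stage we either stop with a Mori fibre space or a minimal model (where the pushforward of $K_X+B+M$ becomes nef), or we enter an eventually infinite sequence of flips, which is explicitly permitted by the statement since termination of glc flips is not being claimed here.
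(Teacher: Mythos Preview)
The paper does not prove this statement at all: Theorem~\ref{thm:CHLX} is quoted verbatim as \cite[Theorem 2.2.3]{CHLX} and used as a black box input to Theorems~\ref{thm:run mmp} and~\ref{thm:main2}. There is therefore no ``paper's own proof'' against which to compare your proposal.

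Your outline is a reasonable high-level summary of the strategy behind the cited result --- cone and contraction theorems for glc pairs, the trichotomy at each step, and the existence of flips via the foliation techniques of \cite{CHLX} --- but none of this appears in the present paper, and for the purposes of this paper no proof is expected or required. If your goal is to supply what the paper supplies, the correct response is simply to cite \cite[Theorem 2.2.3]{CHLX}.
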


\subsection{Asymptotic valuation and Zariski decomposition}
Let $Y$ be a smooth projective variety and $D$ a big $\R$-Cartier divisor on $Y$. For a prime divisor $E$ on $Y$, we define the \textit{asymptotic valuation} $\sigma_E (D)$ of $D$ along $E$ as
\begin{align*}
  \sigma_E(D)\coleq \inf\{\mult_E D'\mid D\sim_{\R}D'\geq 0\}.
\end{align*}
The asymptotic valuation extends to pseudoeffective divisors. If $D$ is only pseudoeffective, then we define $\sigma_E(D)\coleq \lim\limits_{\varepsilon\rightarrow 0} \sigma_E(D+\varepsilon A)$ for an ample divisor $A$ on $X$. It is well known that this definition is independent of the choice of an ample divisor $A$, and there are only finitely many prime divisors $E$ on $Y$ such that $\sigma_E(D)>0$. See \cite{Nak} for details.

Let $X$ be a normal projective variety and $D$ a pseudoeffective $\R$-Cartier divisor on $X$. For a prime divisor $E$ over $X$, the asymptotic valuation $\sigma_E(D)$ is defined as $\sigma_E(D)\coleq \sigma_E(f^{\ast}D)$ for a resolution $f\colon Y\rightarrow X$. By \cite[Theorem III.5.16]{Nak}, $\sigma_E(D)$ is independent on the choice of a resolution $f$.

The \textit{negative part} $\np(D)$ of $D$ is defined as
\begin{align*}
  \np(D)\coleq \sum_E \sigma_E(D)E,
\end{align*}
and the \textit{positive part} $\pp(D)$ of $D$ is defined as $\pp(D)\coleq D-\np(D)$. We call $D=\pp(D)+\np(D)$ the \textit{divisorial Zariski decomposition} of $D$. We call it \textit{Zariski decomposition} if $\pp(D)$ is nef. Moreover, if there exists a projective birational morphism $f\colon Y\rightarrow X$ such that $\pp(f^{\ast}D)$ is nef, then we say that $D$ admits a \textit{birational Zariski decomposition}. If $\pp(f^{\ast}D)$ is $\mathrm{NQC}$, i.e., a linear combination of $\Q$-Cartier nef divisors with nonnegative coefficients, then we say that $D$ admits a birational Zariski decomposition \textit{with $\mathrm{NQC}$ positive part}.

\subsection{Asymptotic base loci}

Let $X$ be a normal projective variety and $D$ an $\R$-Cartier divisor on $X$. The \textit{stable base locus} $\B(D)$ of $D$ is defined as
\begin{align*}
  \B(D)\coleq \bigcap\{\Supp(D')\mid D\sim_{\R} D'\geq 0\}.
\end{align*}
Variations of the stable base locus were defined and studied in \cite{ELMNP-base loci}. The \textit{augmented base locus} $\bp(D)$ of $D$ is defined as
\begin{align*}
  \bp(D)\coleq \bigcap_{A} \B(D-A),
\end{align*}
where the intersection is taken over all ample $\R$-divisors $A$ on $X$. The \textit{restricted base locus} (or \textit{diminished base locus}) $\bm(D)$ of $D$ is defined as
\begin{align*}
  \bm(D)\coleq \bigcup_{A} \B(D+A),
\end{align*}
where the union is taken over all ample $\R$-divisors $A$ on $X$.

The \textit{non-nef locus} $\nnef(D)$ of $D$ is defined as
\begin{align*}
  \nnef(D)\coleq \bigcup_E C_X(E),
\end{align*}
where the union is taken over all prime divisors $E$ over $X$ such that $\sigma_E(D)>0$. In general, we have the following inclusions
\begin{align*}
  \nnef(D)\subseteq \bm(D)\subseteq \B(D)\subseteq \bp(D).
\end{align*}
Also, if $D=P+N$ is the divisorial Zariski decomposition, then $\Supp(N)\subseteq \nnef(D)$. Unlike $\B(D)$ and $\bp(D)$, the restricted base locus $\bm(D)$ is not Zariski closed in general \cite{Les}.

\subsection{Potential triples}

A \textit{potential triple} $(X,\Delta,D)$ consists of a pair $(X,\Delta)$ and a pseudoeffective $\R$-Cartier divisor $D$ on $X$.

\begin{definition}
  Let $(X,\Delta,D)$ be a potential triple and $E$ a prime divisor over $X$. We define the \textit{potential log discrepancy} $a(E;X,\Delta,D)$ of $(X,\Delta,D)$ with respect to $E$ as
  \begin{align*}
    a(E;X,\Delta,D)\coleq a(E;X,\Delta)-\sigma_E (D)
  \end{align*}
  where $a(E;X,\Delta)$ is the log discrepancy of $(X,\Delta)$ with respect to $E$. A potential triple $(X,\Delta,D)$ is called a \textit{pklt} (resp. \textit{plc}) triple if $\inf\limits_E a(E;X,\Delta,D)>0$ (resp. $\geq 0$), where the infimum is taken over all prime divisors $E$ over $X$. When $D=-(K_X+\Delta)$ and it is pseudoeffective, we drop ``$D$" from $(X,\Delta,D)$ and call $(X,\Delta)$ a \textit{potential pair}, which was first introduced and studied in \cite{CP}.
\end{definition}

As we can see in the following theorem, the potential discrepancy is nondecreasing along $(K_X+\Delta+D)$-negative contractions.

\begin{lemma}[\protect{cf. \cite[Theorem 2.6]{CJK}}]\label{lem:pot discrep}
Let $(X,\Delta,D)$ be a potential triple and $R$ a $(K_{X}+\Delta+D)$-negative extremal ray of $\overline{\NE}(X)$. Assume that we have either a divisorial contraction or a flip $\varphi\colon X\dashrightarrow X'$ associated to the ray $R$. Let $\Delta'\coleq\varphi_*\Delta$ and $D'\coleq\varphi_*D$. Then we have
$$a(E;X,\Delta,D)\leq a(E;X',\Delta',D') $$
for any prime divisor $E$ over $X$.
\end{lemma}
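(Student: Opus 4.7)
The plan is to pass to a common resolution, apply the Negativity Lemma to a natural auxiliary divisor, and then reduce the lemma to an inequality about the asymptotic valuation $\sigma_E$.

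First, I would choose a common log resolution $(p,q)\colon Z \to X \times X'$ on which $E$ appears as a prime divisor (blowing up further if needed). Set $F \coleq p^{\ast}(K_X+\Delta+D) - q^{\ast}(K_{X'}+\Delta'+D')$ and $F_1 \coleq p^{\ast}(K_X+\Delta) - q^{\ast}(K_{X'}+\Delta')$. Since $\varphi$ arises from a $(K_X+\Delta+D)$-negative extremal ray $R$, every $q$-contracted curve $C$ on $Z$ has image $p_{\ast}C$ lying in $R$, which forces $F\cdot C\leq 0$; hence $-F$ is $q$-nef. Combined with $q_{\ast}F = 0$ (which follows from $\varphi_{\ast}(K_X+\Delta+D) = K_{X'}+\Delta'+D'$), the Negativity Lemma yields that $F$ is effective and $q$-exceptional.

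Next, expanding $a(E;X,\Delta,D) = a(E;X,\Delta) - \sigma_E(D)$ together with $\sigma_E(D) = \mult_E \np(p^{\ast}D)$ and the analogous identity on $X'$, a direct computation gives
\begin{equation*}
a(E;X',\Delta',D') - a(E;X,\Delta,D) = \mult_E F_1 + \sigma_E(D) - \sigma_E(D').
\end{equation*}
Therefore the lemma is equivalent to the asymptotic valuation inequality $\sigma_E(D') \leq \sigma_E(D) + \mult_E F_1$.

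To establish this inequality, I would use the divisor identity $q^{\ast}D' + F = p^{\ast}D + F_1$ on $Z$. Given any effective $N \sim_{\R} p^{\ast}D$ with $\mult_E N$ close to $\sigma_E(D)$, the divisor $N + F_1 - F$ is $\R$-linearly equivalent to $q^{\ast}D'$ and has multiplicity along $E$ bounded by $\sigma_E(D) + \mult_E F_1 - \mult_E F$. The main obstacle will be guaranteeing effectivity of this candidate representative, since $F_1$ need not be effective and $\sigma_E$ is not in general monotone under subtraction of effective divisors. I plan to resolve this by a case analysis on whether $E$ is exceptional for $p$ or $q$. If $E$ is not exceptional, then $\mult_E F_1 = 0$ and the equality $\sigma_E(D) = \sigma_E(D')$ follows from the pushforward/pullback correspondence of effective representatives via $\varphi$. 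If $E$ is exceptional, the precise support of $F$---proportional to $p^{\ast}E_\varphi$ with a strictly positive coefficient forced by the $(K_X+\Delta+D)$-negativity in the divisorial contraction case, or supported on the common $p$- and $q$-exceptional locus in the flip case---permits a direct local multiplicity comparison, completing the proof.
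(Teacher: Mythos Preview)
Your setup is correct and matches the paper: pass to a common resolution, write $F=p^{\ast}(K_X+\Delta+D)-q^{\ast}(K_{X'}+\Delta'+D')$, use the Negativity Lemma to get $F\geq 0$, and reduce the statement to the asymptotic valuation inequality
\[
\sigma_E(D')\leq \sigma_E(D)+\mult_E F_1.
\]
The difficulty you identify is also the real one: the candidate $N+F_1-F$ need not be effective, so it does not bound $\sigma_E(D')$.

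The gap is in your proposed resolution. Splitting on whether $E$ is exceptional does not give access to the sign of $p^{\ast}D-q^{\ast}D'$, which is what actually controls the comparison of $\sigma_E(D)$ and $\sigma_E(D')$. In the non-exceptional case you claim $\sigma_E(D)=\sigma_E(D')$; only the inequality $\sigma_E(D')\leq\sigma_E(D)$ is both true and needed (push forward effective representatives along the contraction and pass to the limit), so this case is salvageable. But in the exceptional case your appeal to ``the precise support of $F$'' and a ``direct local multiplicity comparison'' is not an argument: knowing that $F$ is supported on $q$-exceptional divisors (or even that its coefficients are positive) says nothing about how $\sigma_E$ behaves under the non-effective correction $F_1-F$. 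There is no mechanism here that forces $\sigma_E(q^{\ast}D')\leq\sigma_E(p^{\ast}D)+\mult_EF_1$ when $E$ is exceptional.

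The paper closes this gap with a different case split: on the sign of $D\cdot R$. This determines, via the Negativity Lemma applied to $D$ itself (not to $K_X+\Delta+D$), whether $p^{\ast}D-q^{\ast}D'$ or its negative is the effective $q$-exceptional divisor, call it $\Phi_1$. When $D\cdot R>0$ one has $q^{\ast}D'=p^{\ast}D+\Phi_1$ with $\Phi_1\geq 0$, and subadditivity gives $\sigma_E(D')\leq\sigma_E(D)+\mult_E\Phi_1$; then $F_1=F+\Phi_1$ and the desired inequality follows from $F\geq 0$. When $D\cdot R\leq 0$ one has $p^{\ast}D=q^{\ast}D'+\Phi_1$ with $\Phi_1\geq 0$ $q$-exceptional, and here the key input is \cite[Lemma III.5.14]{Nak}, which yields the \emph{equality} $\sigma_E(D)=\sigma_E(D')+\mult_E\Phi_1$; combined with $F_1=F-\Phi_1$ this again reduces to $F\geq 0$. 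Your proposal is missing both the case split on $D\cdot R$ and the invocation of Nakayama's lemma, and without them the exceptional case cannot be completed.
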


\begin{proof}
Let $p\colon W\to X$ and $q\colon W\to X'$ be a common resolution. Then we can write
$$ p^{\ast}(K_{X}+\Delta+D)=q^{\ast}(K_{X'}+\Delta'+D')+\Phi,$$
where $\Phi$ is an effective $q$-exceptional divisor. First, assume that $D\cdot R>0$. By the negativity lemma, we can find an effective $q$-exceptional divisor $\Phi_{1}$ such that $p^{\ast}D+\Phi_{1}=q^{\ast}D'$. Thus, for a prime divisor $E$ on $W$, we have
$$ \sigma_{E}(D')\le \sigma_{E}(D)+\mult_{E}\Phi_{1}. $$
Let $\Phi_{2}=p^{\ast}(K_{X}+\Delta)-q^{\ast}(K_{X'}+\Delta')$. Then we have
$$ \Phi_{2}=\sum_{E}(a(E;X',\Delta')-a(E;X,\Delta))E, $$
where $E$ runs over all prime divisors on $W$. By construction, $\Phi=\Phi_{2}-\Phi_{1}$ and we obtain that
\begin{align*}
a(E;X',\Delta',D')-a(E;X,\Delta,D)&=a(E;X',\Delta')-a(E;X,\Delta)+\sigma_{E}(D)-\sigma_{E}(D')\\
&\ge \mult_{E}\Phi_{2}-\mult_{E}\Phi_{1}\\
&=\mult_{E}\Phi\ge 0.
\end{align*}

Now, assume that $D\cdot R\le 0$. Again, by the negativity lemma, we can find an effective $q$-exceptional divisor $\Phi_{1}$ such that $p^{\ast}D=q^{\ast}D'+\Phi_{1}$. In this case, by \cite[Lemma III.5.14]{Nak}, we have
$$ \sigma_{E}(D)=\sigma_{E}(D')+\mult_{E}\Phi_{1} $$
for any prime divisor $E$ on $W$. Let $\Phi_{2}=p^{\ast}(K_{X}+\Delta)-q^{\ast}(K_{X'}+\Delta')$. Then $\Phi=\Phi_{1}+\Phi_{2}$ and we obtain that
\begin{align*}
a(E;X',\Delta',D')-a(E;X,\Delta,D)&=a(E;X',\Delta')-a(E;X,\Delta)+\sigma_{E}(D)-\sigma_{E}(D')\\
&=\mult_{E}\Phi_{2}+\mult_{E}\Phi_{1}\\
&=\mult_{E}\Phi\ge 0.\qedhere
\end{align*}
\end{proof}

In particular, if $(X,\Delta,D)$ is a plc triple and $f\colon (X,\Delta,D)\dashrightarrow (Y,\Delta_Y,D_Y)$ is a $(K_X+\Delta+D)$-negative contraction, then the resulting triple $(Y,\Delta_Y,D_Y)$ is also a plc triple.

Note that we require $K_X+B+M$ to be only $\R$-Cartier for a generalized pair $(X,B+M)$. 
On the other hand, in a potential triple $(X,\Delta,D)$, $K_X+\Delta$ and $D$ are both $\R$-Cartier divisors. Even if the underlying variety $X$ is $\Q$-factorial, the notions of pklt and plc are more general than those of gklt and glc.

\begin{proposition} [\protect{\cite[Remark 3.1]{CJK}}]\label{prop:g implies p}
  Let $(X,\Delta+D)$ be a generalized pair and we consider $(X,\Delta,D)$ as a potential triple. If $(X,\Delta+D)$ is a $\Q$-factorial gklt (resp. glc) pair, then $(X,\Delta,D)$ is a pklt (resp. plc) triple.
\end{proposition}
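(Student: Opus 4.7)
The plan is to establish that for every prime divisor $E$ over $X$ one has the inequality
$$a(E;X,\Delta,D)\ge a(E;X,\Delta+D),$$
after which both implications gklt $\Rightarrow$ pklt and glc $\Rightarrow$ plc follow immediately from the definitions.

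For a fixed $E$, I would pass to a sufficiently high smooth birational model $f\colon Y\to X$ on which both the nef $\R$-Cartier divisor $M_Y$ (the data of the generalized pair, so that $f_{\ast}M_Y=D$) and the prime divisor $E$ appear. Writing
$$K_Y+\Delta_Y=f^{\ast}(K_X+\Delta),\qquad K_Y+B_Y+M_Y=f^{\ast}(K_X+\Delta+D),$$
and subtracting yields the identity $B_Y-\Delta_Y=f^{\ast}D-M_Y$. Since $a(E;X,\Delta,D)=1-\mult_E\Delta_Y-\sigma_E(D)$ and $a(E;X,\Delta+D)=1-\mult_E B_Y$, the desired inequality is equivalent to the estimate
$$\sigma_E(D)\le\mult_E(f^{\ast}D-M_Y).$$

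The main step---and the only one that is not purely formal---is to show that $f^{\ast}D-M_Y$ is effective. Since $f_{\ast}(f^{\ast}D-M_Y)=D-D=0$, the divisor $f^{\ast}D-M_Y$ is $f$-exceptional; and for every $f$-contracted curve $C$ one has $(M_Y-f^{\ast}D)\cdot C=M_Y\cdot C\ge 0$ because $M_Y$ is nef, so $-(f^{\ast}D-M_Y)$ is $f$-nef, and the negativity lemma forces $f^{\ast}D-M_Y\ge 0$. Once this is in hand, $f^{\ast}D=M_Y+(f^{\ast}D-M_Y)$ is a decomposition into a nef summand plus an effective summand, and the standard properties of $\sigma_E$---subadditivity on pseudoeffective classes, vanishing on nef divisors, and $\sigma_E(N)\le\mult_E N$ for effective $N$---immediately give $\sigma_E(D)=\sigma_E(f^{\ast}D)\le\mult_E(f^{\ast}D-M_Y)$, as required. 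The $\Q$-factoriality hypothesis is used only to guarantee that $D$ and $K_X+\Delta$ are separately $\R$-Cartier, so that $\sigma_E(D)$ and $a(E;X,\Delta)$ are defined in the first place.
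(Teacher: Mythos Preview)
The paper does not supply its own proof of this proposition; it is stated with a reference to \cite[Remark 3.1]{CJK} and no argument is given in the present paper. Your proof is correct and is precisely the expected one: the key point is the effectivity of $f^{\ast}D-M_Y$ via the negativity lemma, after which $f^{\ast}D=M_Y+(f^{\ast}D-M_Y)$ exhibits a nef\,$+$\,effective splitting that bounds $\sigma_E(f^{\ast}D)$ from above by $\mult_E(f^{\ast}D-M_Y)$. This is also consistent with, and essentially the ``inequality half'' of, the computation the paper carries out in the proof of Theorem~\ref{thm:generalized}, where the additional hypothesis that $D$ admits a birational Zariski decomposition upgrades your inequality $\sigma_E(D)\le\mult_E(f^{\ast}D-M_Y)$ to an equality.
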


As mentioned in \cite{CJK}, the collection of $\Q$-factorial potential triples is strictly larger than that of generalized pairs. The main reason is that, for a potential triple $(X,\Delta,D)$, $D$ does not necessarily admit a birational Zariski decomposition. Below are examples of divisors which do not admit any (birational) Zariski decomposition.

\begin{example}\hfill\label{Example}
  \begin{enumerate}[(1)]
    \item Let $X$ be a smooth projective variety and $D$ a pseudoeffective divisor on $X$. For a prime divisor $E$ on $X$, we have $\sigma_E(D)>0$ if and only if $E\subset \bm(D)$. Hence, if $\codim\B(D)\geq 2$ (i.e., $D$ is a movable divisor), then $N_\sigma(D)=0$. If additionally $D$ is not nef, then $D$ does not have the Zariski decomposition.
    \item Let $X$ be a normal projective variety and $D$ a pseudoeffective $\R$-divisor on $X$. It is well known that if there exists a resolution $f\colon Y\rightarrow X$ for which $D$ admits a birational Zariski decomposition $f^{\ast}D=P+N$, then $\bm(D)$ is Zariski closed. In \cite{Les}, Lesieutre constructed an effective divisor $D$ on a three-dimensional variety $X$ such that $\bm(D)$ is not Zariski closed. Hence, $D$ does not admit a birational Zariski decomposition.
    \item The bigness of a divisor $D$ or Zariski closedness of $\bm(D)$ do not guarantee the existence of a birational Zariski decomposition. In \cite[Chapter IV, \textsection 2]{Nak}, Nakayama constructed a big divisor $D$ on a four-dimensional variety $X$ such that $D$ does not admit a birational Zariski decomposition. In this example, however, $\bm(D)$ is Zariski closed.
  \end{enumerate}
  If $(X,\Delta,D)$ is a potential triple with $D$ as in (2) or (3), then one cannot associate a generalized pair structure.
\end{example}

\begin{definition}
  Let $(X,\Delta,D)$ be a potential triple. The \textit{potentially non-klt locus} $\pnklt(X,\Delta,D)$ of $(X,\Delta,D)$ is defined as
  \begin{align*}
    \pnklt(X,\Delta,D)\coleq \bigcup_E C_X(E),
  \end{align*}
  where the union is taken over all prime divisor $E$ over $X$ such that $a(E;X,\Delta,D)\leq 0$. For a plc triple $(X,\Delta,D)$, such an image $C_X(E)$ is called the \textit{plc center} of $(X,\Delta,D)$.
\end{definition}

Note that we have the following inclusions
\begin{align*}
  \nklt(X,\Delta)\subseteq \pnklt(X,\Delta,D)\subseteq \nklt(X,\Delta)\cup \nnef(D).
\end{align*}
By \cite[Theorem 1.4]{CJK}, if $(X,\Delta,D)$ is a potential triple such that $D$ is a big $\Q$-divisor, then $\pnklt(X,\Delta,D)$ is Zariski closed. However, in general, unlike the generalized non-klt locus, it is unclear whether $\pnklt(X,\Delta,D)$ is Zariski closed or not.

\subsection{MMP for potential triples}\label{subsec: pklt,glc}
We run the MMP on potential triples as follows. Let $(X,\Delta,D)$ be a $\Q$-factorial plc triple. If $K_X+\Delta+D$ is nef, then $(X,\Delta,D)$ is a minimal model and there is nothing further to do. If $K_X+\Delta+D$ is not nef, then using Theorem \ref{thm:main1} we construct a sequence of divisorial contractions and flips that are $(K_X+\Delta+D)$-negative. By Lemma \ref{lem:pot discrep}, the triples in the intermediate steps are all plc triples. In this manner, after finite steps we expect to obtain either a $(K_X+\Delta+D)$-minimal model or a $(K_X+\Delta+D)$-Mori fibre space. As we have emphasized in Example \ref{Example}, the category of potential triples is strictly larger than that of generalized pairs. Hence, we expect that we would obtain more delicate classification of varieties once the MMP on potential triples is established.

In some special cases as in Theorem \ref{thm:main2} and Theorem \ref{thm:run mmp}, we can run the $(K_X+\Delta+D)$-MMP by reducing to the MMP on generalized pairs. However, in order to obtain such MMP on plc triples in general, we will need to require the Cone theorem and Contraction theorem for potential triples. These will appear in our future works.

\section{Main results and Proofs}\label{sect:main}

In this section, we prove various properties of potential triples. It is well known that the non-klt locus $\nklt(X,\Delta)$ of a pair $(X,\Delta)$ is closed since it can be computed by a fixed log resolution of $(X,\Delta)$. In \cite[Theorem 2.6]{CJ}, when $D=-(K_X+\Delta)$ is pseudoeffective in a potential triple and $-(K_X+\Delta)$ admits a birational Zariski decomposition, then $\pnklt(X,\Delta,D)$ is also closed by the same reason that it can also be computed by some fixed log resolution of $(X,\Delta)$.

For a plc triple $(X,\Delta,D)$ with a pseudoeffective divisor $D$ which admits a birational Zariski decomposition, one can associate a glc pair structure to $(X,\Delta,D)$ by the following theorem. As a byproduct, we obtain that the potentially non-klt locus $\pnklt(X,\Delta,D)$ is Zariski closed.

\begin{theorem}\label{thm:generalized}
Let $(X,\Delta,D)$ be a potential triple with a pseudoeffective divisor $D$ which admits a birational Zariski decomposition $f^{\ast}D=P+N$ for some projective birational morphism $f\colon Y\to X$. Then $(X,\Delta,D)$ is a pklt (resp. plc) triple if and only if $(X,(\Delta+f_{\ast}N)+f_{\ast}P)$, considered as a generalized pair, is a gklt (resp. glc) pair. Furthermore, we have $\pnklt(X,\Delta,D)=\gnklt(X,(\Delta+f_{\ast}N)+f_{\ast}P)$, which implies that $\pnklt(X,\Delta,D)$ is Zariski closed.
\end{theorem}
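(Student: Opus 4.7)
The plan is to establish the single identity $a(E;X,\Delta,D)=a(E;X,(\Delta+f_{\ast}N)+f_{\ast}P)$ for every prime divisor $E$ over $X$. The equivalences of pklt with gklt and of plc with glc, as well as the equality $\pnklt(X,\Delta,D)=\gnklt(X,(\Delta+f_{\ast}N)+f_{\ast}P)$, then all follow immediately by taking infima over such $E$; the Zariski closedness of $\pnklt(X,\Delta,D)$ reduces to the known Zariski closedness of $\gnklt$ for generalized pairs.

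First I would pass to a sufficiently high common model. Given a prime divisor $E$ over $X$, choose a projective birational morphism $g\colon Z\to Y$ with $Z$ smooth on which $E$ is realized as a divisor, and set $h\coleq f\circ g$. The key technical step is to identify the divisorial Zariski decomposition of $h^{\ast}D$. Pulling back $f^{\ast}D=P+N$ along $g$ yields $h^{\ast}D=g^{\ast}P+g^{\ast}N$, where $g^{\ast}P$ remains nef (as the pullback of a nef divisor) and $g^{\ast}N$ is effective. By the characterization of the divisorial Zariski decomposition in Nakayama's theory, any splitting of a pseudoeffective divisor into a nef summand plus an effective summand is necessarily the divisorial Zariski decomposition, hence $N_{\sigma}(h^{\ast}D)=g^{\ast}N$. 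In particular, for every prime divisor $F$ on $Z$ we obtain $\sigma_{F}(D)=\sigma_{F}(h^{\ast}D)=\mult_{F}(g^{\ast}N)$.

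Next I would compute the generalized boundary $B_Z$ of $(X,(\Delta+f_{\ast}N)+f_{\ast}P)$ using the data $h\colon Z\to X$ equipped with the nef divisor $g^{\ast}P$. Since $f$ is birational, $f_{\ast}N+f_{\ast}P=f_{\ast}(f^{\ast}D)=D$, so $h^{\ast}(f_{\ast}N+f_{\ast}P)=h^{\ast}D=g^{\ast}P+g^{\ast}N$. Writing $h^{\ast}(K_X+\Delta)=K_Z+\Delta_Z$ in the usual way, the defining relation
\begin{align*}
K_Z+B_Z+g^{\ast}P=h^{\ast}(K_X+\Delta+f_{\ast}N+f_{\ast}P)=K_Z+\Delta_Z+g^{\ast}P+g^{\ast}N
\end{align*}
simplifies to $B_Z=\Delta_Z+g^{\ast}N$. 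Combining this with the identification of $\sigma_{E}(D)$ above gives
\begin{align*}
a(E;X,(\Delta+f_{\ast}N)+f_{\ast}P)=1-\mult_{E}B_Z=a(E;X,\Delta)-\sigma_{E}(D)=a(E;X,\Delta,D),
\end{align*}
as desired.

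The main obstacle I anticipate is in the second paragraph: justifying that the birational Zariski decomposition on $Y$ pulls back to the divisorial Zariski decomposition on every higher model $Z$ via the naive splitting $g^{\ast}P+g^{\ast}N$. This relies on the uniqueness of the divisorial Zariski decomposition whenever the positive part can be chosen nef, a nontrivial fact from Nakayama's theory. Once this is secured, the remainder is bookkeeping with the projection formula and the defining equations of discrepancies, and the final Zariski closedness statement follows because on a sufficiently high log resolution that simultaneously resolves $(X,\Delta+f_{\ast}N)$ and carries the nef part, only finitely many prime divisors can have nonpositive generalized log discrepancy, so the union of their images on $X$ is closed.
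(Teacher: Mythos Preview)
Your overall plan coincides with the paper's: both reduce everything to the single identity $a(E;X,\Delta,D)=a(E;X,(\Delta+f_{\ast}N)+f_{\ast}P)$ for every prime divisor $E$ over $X$, and both pass to a higher model on which $E$ appears. The paper simply writes ``by taking $Y$ higher if necessary'' where you introduce $g\colon Z\to Y$ explicitly, and your computation $B_Z=\Delta_Z+g^{\ast}N$ is exactly the paper's line $a(E;X,\Delta,D)=1-\mult_E(\Delta_Y+N)$ unpacked.

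The one genuine gap is in your justification of $N_{\sigma}(h^{\ast}D)=g^{\ast}N$. The assertion that ``any splitting of a pseudoeffective divisor into a nef summand plus an effective summand is necessarily the divisorial Zariski decomposition'' is false: a hyperplane class $H$ on $\P^n$ is both nef and effective, so $H=H+0$ and $H=0+H$ are two distinct such splittings. Nefness of $g^{\ast}P$ alone yields only the inequality $N_{\sigma}(h^{\ast}D)\le g^{\ast}N$ (via subadditivity of $\sigma$ and $\sigma_\bullet(\text{nef})=0$). For the reverse inequality you must use that $P+N$ was already the $\sigma$-decomposition on $Y$, not merely some nef-plus-effective splitting. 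One clean way: set $F\coleq g^{\ast}N-N_{\sigma}(h^{\ast}D)\ge 0$; pushing forward and invoking the maximality of $P_{\sigma}(f^{\ast}D)=P$ among movable divisors below $f^{\ast}D$ shows $g_{\ast}F=0$, so $F$ is $g$-exceptional; then $P_{\sigma}(h^{\ast}D)=g^{\ast}P+F$, and \cite[Lemma~III.5.14]{Nak} applied to the nef divisor $P$ and the $g$-exceptional $F$ gives $N_{\sigma}(g^{\ast}P+F)=F$, forcing $F=0$ since $P_{\sigma}(h^{\ast}D)$ is movable. The paper glosses over this same point, but since you correctly singled it out as ``the main obstacle,'' the argument you supply for it should be the right one.
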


\begin{proof}
If we write $K_{Y}+\Delta_{Y}=f^{\ast}(K_{X}+\Delta)$ for some $\R$-divisor $\Delta_Y$ on $Y$, then we have $K_{Y}+\Delta_{Y}+N+P=f^{\ast}(K_{X}+\Delta+D)$. Let $E$ be a prime divisor over $X$. By taking $Y$ higher if necessary, we can assume that $E$ is a divisor on $Y$. By definition, we have the following equalities 
$$a(E;X,\Delta,D)=a(E;X,\Delta)-\mult_{E}N=1-\mult_{E}(\Delta_Y+N).$$ 
Hence, $(X,(\Delta+f_{\ast}N)+f_{\ast}P)$ is a gklt (resp. glc) pair if and only if $(X,\Delta,D)$ is a pklt (resp. plc) triple. Moreover, any plc center of $(X,\Delta,D)$ is a glc center of $(X,(\Delta+f_{\ast}N)+f_{\ast}P)$. Therefore, we can conclude that $\pnklt(X,\Delta,D)=\gnklt(X,(\Delta+f_{\ast}N)+f_{\ast}P)$. In particular, since the generalized non-klt locus is Zariski closed, so is $\pnklt(X,\Delta,D)$.
\end{proof}

By letting $D=-(K_X+\Delta)$ in Theorem \ref{thm:generalized}, the following corollary immediately follows.

\begin{corollary}[cf. \protect{\cite[Theorem 2.6]{CJ}}]
  Let $(X,\Delta)$ be a plc pair with pseudoeffective $-(K_X+\Delta)$. If $-(K_X+\Delta)$ admits a birational Zariski decomposition, then $\pnklt(X,\Delta)$ is Zariski closed.
\end{corollary}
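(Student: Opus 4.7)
The plan is to apply Theorem \ref{thm:generalized} directly with the special choice $D=-(K_X+\Delta)$. Since $(X,\Delta)$ is a potential pair in the sense introduced after the definition of potential triples, one forms the potential triple $(X,\Delta,D)$ with $D=-(K_X+\Delta)$; the hypothesis that $(X,\Delta)$ is plc translates to $(X,\Delta,D)$ being a plc triple, and the hypothesis on the birational Zariski decomposition of $-(K_X+\Delta)$ provides a projective birational morphism $f\colon Y\to X$ with $f^{\ast}D=P+N$ and $P$ nef.

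With this set up, the first and only essential step is to invoke Theorem \ref{thm:generalized}, which yields the identity
\[
\pnklt(X,\Delta,D)=\gnklt\bigl(X,(\Delta+f_{\ast}N)+f_{\ast}P\bigr),
\]
after verifying that the associated object $(X,(\Delta+f_{\ast}N)+f_{\ast}P)$ is a bona fide generalized pair: the pushforward $f_{\ast}N$ is effective, $P$ is nef on $Y$, and $K_X+(\Delta+f_{\ast}N)+f_{\ast}P=K_X+\Delta+D=0$ is clearly $\R$-Cartier, so the generalized pair axioms are satisfied with nef part $P$.

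The conclusion is then automatic from the well-known fact that the generalized non-klt locus of a generalized pair is Zariski closed (it is a finite union of images $C_X(E)$ of prime divisors $E$ with generalized log discrepancy $\le 0$, computed on a single fixed log resolution of $(X,\Delta+f_{\ast}N)$ that also extracts $P+N$). Unwinding notation, $\pnklt(X,\Delta)$ for the potential pair is by convention $\pnklt(X,\Delta,-(K_X+\Delta))$, so Zariski closedness transfers directly.

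There is essentially no obstacle here beyond bookkeeping: the real content has already been packaged into Theorem \ref{thm:generalized}, so this corollary amounts to specializing $D$ and translating notation. The only point one should check carefully is that the definition of \emph{potential pair} at the end of Section \ref{sect:prelim} is literally the special case $D=-(K_X+\Delta)$ of a potential triple, which justifies the notational reduction $\pnklt(X,\Delta)=\pnklt(X,\Delta,-(K_X+\Delta))$ and makes Theorem \ref{thm:generalized} applicable verbatim.
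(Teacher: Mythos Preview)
Your proposal is correct and follows exactly the paper's approach: the paper simply states that the corollary follows immediately from Theorem~\ref{thm:generalized} by letting $D=-(K_X+\Delta)$, and your argument carries out precisely this specialization with the bookkeeping made explicit.
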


\begin{theorem}\label{thm:run mmp}
  Let $(X,\Delta,D)$ be a $\Q$-factorial plc triple such that $D$ admits a birational Zariski decomposition. Then we can run a $(K_X+\Delta+D)$-MMP.
\end{theorem}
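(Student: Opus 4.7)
The plan is to reduce the $(K_X+\Delta+D)$-MMP on the potential triple directly to the established MMP on a suitably associated generalized pair.

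First, I would apply Theorem \ref{thm:generalized} to attach to $(X,\Delta,D)$ the $\Q$-factorial glc generalized pair $(X,(\Delta+f_\ast N)+f_\ast P)$. Since $D$ is $\R$-Cartier and $f$ is birational, $D=f_\ast f^\ast D=f_\ast P+f_\ast N$, which yields the equality of $\R$-Cartier divisors
\begin{equation*}
K_X+\Delta+D=K_X+(\Delta+f_\ast N)+f_\ast P.
\end{equation*}
In particular, an extremal ray of $\overline{\NE}(X)$ is $(K_X+\Delta+D)$-negative if and only if it is negative for the generalized pair, and for any such ray the corresponding divisorial contractions and flips for the two MMPs coincide as birational maps of the underlying varieties.

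Next, I would invoke Theorem \ref{thm:CHLX} to produce a sequence
\begin{equation*}
X=X_1\dashrightarrow X_2\dashrightarrow X_3\dashrightarrow\cdots
\end{equation*}
of divisorial contractions and flips running the MMP on the $\Q$-factorial glc pair $(X,(\Delta+f_\ast N)+f_\ast P)$. By the divisor equality above, this sequence is a $(K_X+\Delta+D)$-MMP. Lemma \ref{lem:pot discrep} then guarantees that at every stage the triple $(X_i,\Delta_i,D_i)$ remains plc, so the process is a legitimate MMP on potential triples in the sense of Section \ref{subsec: pklt,glc}.

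The essential (and really the only) ingredient is Theorem \ref{thm:generalized}, which converts the birational Zariski decomposition hypothesis on $D$ into a genuine generalized pair structure. Once that translation is in place, the existing MMP machinery for $\Q$-factorial glc generalized pairs carries everything forward step by step and no new obstacle appears. A small point worth flagging is that the proper transform $D_i$ on $X_i$ need not continue to admit a birational Zariski decomposition; this is harmless, however, because what is being transported along the MMP is the generalized pair datum (the trace of a b-nef divisor), and the identification $K_{X_i}+\Delta_i+D_i=K_{X_i}+(\Delta_i+(f_\ast N)_i)+(f_\ast P)_i$ persists at each stage.
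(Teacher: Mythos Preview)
Your proposal is correct and follows essentially the same route as the paper: use Theorem \ref{thm:generalized} to pass to the glc generalized pair $(X,(\Delta+f_\ast N)+f_\ast P)$, observe the equality $K_X+\Delta+D=K_X+(\Delta+f_\ast N)+f_\ast P$, and then invoke Theorem \ref{thm:CHLX}. Your additional remarks about Lemma \ref{lem:pot discrep} and the persistence of the generalized pair datum are accurate elaborations that the paper makes only after the proof.
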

\begin{proof}
  Let $f^\ast D=P+N$ be the birational Zariski decomposition for some projective birational morphism $f\colon Y\rightarrow X$. By Theorem \ref{thm:generalized}, $(X,(\Delta+f_{\ast}N)+f_{\ast}P)$ is a glc pair. Hence, by Theorem \ref{thm:CHLX}, we can run a generalized MMP on $K_X+(\Delta+f_{\ast}N)+f_{\ast}P\equiv K_X+\Delta+D$.
\end{proof}

Now we have the proof of our first main theorem of the paper.

\begin{proof}[Proof of Theorem \ref{thm:main1}]
Theorems \ref{thm:generalized} and \ref{thm:run mmp} imply the statements (1) and (2) in Theorem \ref{thm:main1}.
\end{proof}

By Theorem \ref{lem:pot discrep}, the plc condition is preserved along this $(K_X+\Delta+D)$-MMP. We note that the termination of  $(K_X+\Delta+D)$-MMP is not guaranteed.

\begin{corollary}\label{cor:-K lc mmp}
  Let $(X,\Delta,D)$ be a $\Q$-factorial  plc triple with $D\coleq -(1+\varepsilon)(K_X+\Delta)$ pseudoeffective for some small positive $\varepsilon$. If $D$ admits a birational Zariski decomposition, then we can run the $-(K_X+\Delta)$-MMP.
\end{corollary}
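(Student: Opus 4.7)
The plan is to reduce the corollary directly to Theorem \ref{thm:run mmp} by a simple algebraic manipulation, using the fact that scaling a divisor by a positive constant does not affect the notion of negativity of a birational contraction.

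First, I would verify that the hypotheses of Theorem \ref{thm:run mmp} are satisfied by the triple $(X,\Delta,D)$. By assumption, $X$ is $\Q$-factorial, $(X,\Delta,D)$ is a plc triple, $D$ is pseudoeffective, and $D$ admits a birational Zariski decomposition. Hence Theorem \ref{thm:run mmp} applies and we obtain a $(K_X+\Delta+D)$-MMP.

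Next, I would compute the divisor driving this MMP:
\begin{align*}
K_X + \Delta + D \;=\; K_X + \Delta - (1+\varepsilon)(K_X+\Delta) \;=\; -\varepsilon(K_X+\Delta).
\end{align*}
Since $\varepsilon>0$, a birational contraction $\varphi\colon X\dashrightarrow Y$ is $(K_X+\Delta+D)$-negative if and only if it is $-(K_X+\Delta)$-negative, because both conditions amount to the same statement about the common resolution decomposition after dividing by the positive scalar $\varepsilon$. The same holds for extremal rays, flipping contractions and divisorial contractions, and the $(K_X+\Delta+D)$-nefness of the final pushforward is equivalent to the $-(K_X+\Delta)$-nefness. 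Therefore the $(K_X+\Delta+D)$-MMP produced by Theorem \ref{thm:run mmp} is, step by step, a $-(K_X+\Delta)$-MMP in the sense of the definition recalled in Section \ref{sect:prelim}.

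The argument is essentially tautological once Theorem \ref{thm:run mmp} is available, so I do not foresee a genuine obstacle. The only point to be careful about is that we do not claim termination, only the existence of the MMP sequence, which is consistent with the remark immediately preceding the corollary.
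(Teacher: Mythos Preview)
Your proposal is correct and matches the paper's implicit argument: the paper states the corollary without proof, treating it as immediate from Theorem \ref{thm:run mmp} via the identity $K_X+\Delta+D=-\varepsilon(K_X+\Delta)$, which is exactly the computation you carry out. Your remark about not claiming termination is also consistent with the paper's comment following Theorem \ref{thm:run mmp}.
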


By detouring through generalized pairs, we give another proof of the main theorem of \cite{Jan}. As noted in \cite[Remark 5.19]{TX}, the $\mathrm{NQC}$ condition is necessary in \cite[Theorem 5.18]{TX}.

\begin{proof}[Proof of Corollary \ref{cor:main1}]
  Let $-f^{\ast}(K_X+\Delta)=P+N$ be the birational Zariski decomposition for some projective birational morphism $f\colon Y\rightarrow X$. By Theorem \ref{thm:generalized}, $(X,(\Delta+f_{\ast}N)+f_{\ast}P)$ is a gklt pair. For a sufficiently small rational number $\varepsilon>0$, $(X,(\Delta+(1+\varepsilon)f_{\ast}N)+(1+\varepsilon)f_{\ast}P)$ is also a gklt pair. By Theorem \ref{thm:generalized}, $(X,\Delta,D\coleq -(1+\varepsilon)(K_X+\Delta))$ is a pklt triple. Moreover, $K_X+(\Delta+(1+\varepsilon)f_{\ast}N)+(1+\varepsilon)f_{\ast}P\equiv K_X+\Delta+D\equiv -\varepsilon(K_X+\Delta)$ admits a birational Zariski decomposition with $\mathrm{NQC}$ positive part. Hence, by \cite[Theorem 5.18]{TX}, there exists a $(K_X+\Delta+D)$-minimal model which is also a $-(K_X+\Delta)$-minimal model.
\end{proof}

In \cite{CJL}, we showed that for a plc pair $(X,\Delta)$ with $-(K_X+\Delta)$ a big $\Q$-Cartier divisor, if no plc centers of $(X,\Delta)$ are contained in $\bp(-(K_X+\Delta))$, then there exists a good $-(K_X+\Delta)$-minimal model. It is natural to ask what would happen if we generalize this to potential triple setting.

\begin{proof}[Proof of Theorem \ref{thm:main2}]
  We note that $\nnef(D)\subseteq \bp(D)$ and the following inclusion
  $$\nklt(X,\Delta)\subseteq \pnklt(X,\Delta,D)\subseteq \nklt(X,\Delta)\cup \nnef(D).$$ Since no plc centers of $(X,\Delta,D)$ are contained in $\bp(D)$, we obtain $\pnklt(X,\Delta,D)=\nklt(X,\Delta)$. Hence, for any prime divisor $E$ over $X$, we have $a(E;X,\Delta,D)=0$ if and only if $a(E;X,\Delta)=\sigma_E(D)=0$. Since $D$ is a big $\Q$-divisor, as we can see in \cite[Proof of Proposition 4.9]{CJK}, there exists an effective divisor $D'\sim_{\Q} D$ such that $\pnklt(X,\Delta,D)=\nklt(X,\Delta+D')$. Thus, $(X,\Delta+D')$ is an lc pair and by Theorem \ref{thm:CHLX}, we can run the MMP on $K_X+\Delta+D'\equiv K_X+\Delta+D$.
\end{proof}

Even under the conditions as in Theorem \ref{thm:main2}, we do not know the existence of $(K_X+\Delta+D)$-minimal models unless $D=-(K_X+\Delta)$. When $D=-(K_X+\Delta)$, there exists a good $-(K_X+\Delta)$-minimal model by \cite[Theorem 1.2]{CJL}. By Theorem \ref{lem:pot discrep}, the plc condition is also preserved along this MMP.

Let $X_{\nklt}\coleq \bigcap \nklt(X,\Delta)$ where the intersection is taken over all pairs $(X,\Delta)$ with effective $\Q$-divisors $\Delta$. By \cite[Corollary 4.7]{CD}, $\nnef(D)$ and $\bm(D)$ coincide outside $X_{\nklt}$. However, if $D$ has good positivity, then the loci $\nnef(D), \bm(D)$ and $\B(D)$ all coincide without assuming any singularity conditions on $X$. The following proposition was first proved in \cite[Lemma 3.1]{TX2} under slightly different conditions. However, the result and the proof is almost verbatim.

\begin{proposition}[cf. \protect{\cite[Lemma 3.1]{TX2}}]\label{prop:loci coincide}
  Let $X$ be a normal projective variety and $D$ an $\R$-Cartier divisor on $X$. Suppose that $f\colon Y\rightarrow X$ is a projective birational morphism from a normal projective variety $Y$. If $D$ admits a birational Zariski decomposition $f^\ast D=P+N$ such that $P$ is semiample, then $\nnef(D)=\bm(D)=\B(D)$.
\end{proposition}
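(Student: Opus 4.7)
The plan is to establish the single nontrivial inclusion $\B(D) \subseteq \nnef(D)$, since the reverse chain $\nnef(D) \subseteq \bm(D) \subseteq \B(D)$ is standard and already recalled in Section \ref{sect:prelim}. I would first identify $\nnef(D)$ with the image $f(\Supp N)$. As $P$ is nef and $N$ is effective, the given decomposition is the divisorial Zariski decomposition of $f^{\ast}D$ by uniqueness of the negative part, so $N = \np(f^{\ast}D)$. Since $\sigma_{E}(D) = \sigma_{E}(f^{\ast}D)$ for every prime divisor $E$ over $X$ (verified on a common higher resolution $g \colon Y'\to Y$, where $g^{\ast}P$ remains nef and contributes no asymptotic valuation), every such $E$ with $\sigma_{E}(D) > 0$ must lie over a component of $N$, yielding $\nnef(D) = f(\Supp N)$.

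Next I would show $\B(f^{\ast}D) = \Supp N$ on $Y$. The containment $\Supp N \subseteq \B(f^{\ast}D)$ is immediate, because every effective $D'' \sim_{\R} f^{\ast}D$ satisfies $\mult_{E} D'' \geq \sigma_{E}(f^{\ast}D) > 0$ along any component $E$ of $N$. For the reverse, I would expand the semiample $P = \sum a_{i} P_{i}$ as a nonnegative $\R$-linear combination of base-point-free Cartier divisors $P_{i}$. Given $y \in Y \setminus \Supp N$, I would pick sections $s_{i} \in H^{0}(Y, \mathcal{O}_{Y}(P_{i}))$ with $s_{i}(y) \neq 0$; then $P' \coleq \sum a_{i}\, \mathrm{div}(s_{i})$ is effective, $\R$-linearly equivalent to $P$, and avoids $y$, so $P' + N \sim_{\R} f^{\ast}D$ is effective and avoids $y$, giving $y \notin \B(f^{\ast}D)$.

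To conclude, I would descend from $Y$ to $X$. Because $f_{\ast}\mathcal{O}_{Y} = \mathcal{O}_{X}$ for a birational morphism between normal projective varieties, pullback identifies $H^{0}(X, \mathcal{O}_{X}(mD))$ with $H^{0}(Y, \mathcal{O}_{Y}(mf^{\ast}D))$ for every integer $m$ making $mD$ Cartier; this yields $\Bs|mf^{\ast}D| = f^{-1}(\Bs|mD|)$ and hence $\B(f^{\ast}D) = f^{-1}(\B(D))$, with a rational-approximation argument extending the identity to $\R$-Cartier $D$. Taking images then gives $\B(D) = f(\B(f^{\ast}D)) = f(\Supp N) = \nnef(D)$, closing the cycle of inclusions. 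The step I anticipate as most delicate is this last descent in the $\R$-Cartier setting, which requires either a direct $\R$-divisor analogue of the section-pullback correspondence or an explicit $\Q$-perturbation; the earlier steps are essentially formal once $N = \np(f^{\ast}D)$ is identified.
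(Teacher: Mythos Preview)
Your proposal is correct and follows essentially the same route as the paper: establish the standard chain $\nnef(D)\subseteq\bm(D)\subseteq\B(D)$, identify $\nnef(D)=f(\Supp N)$, use semiampleness of $P$ to get $\B(f^\ast D)\subseteq\Supp N$, and then descend to $X$. The only cosmetic differences are that the paper obtains $\nnef(f^\ast D)=\Supp N$ by passing through $\bm(f^\ast D)$ (via $\B(P+A)=\emptyset$ for ample $A$) rather than by your direct valuation argument on a higher model, and the paper simply asserts the descent identity $\B(D)=f(\B(f^\ast D))$ that you rightly flag as the one step requiring care in the $\R$-Cartier setting.
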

\begin{proof}
  We first note that the following inclusions always hold
  \begin{align*}
    \nnef(D)\subseteq \bm(D)\subseteq \B(D).
  \end{align*}
  Let us first show that $\nnef(D)=f(\Supp(N))$. If $N=\sum a_i N_i$ for some positive numbers $a_i$, then $\sigma_{N_i}(D)= a_i>0$. Therefore, we have the inclusion
  $$\Supp(N)\subseteq \nnef(f^\ast D).$$
  On the other hand, for any ample divisor $A$ on $Y$, we have $\B(f^\ast D+A)\subseteq \Supp(N)$ since $\B(P+A)=\emptyset$. Moreover, by the definition of restricted base locus, we have $\bm(f^\ast D)=\bigcup\limits_A \B(f^\ast D+A)$, where the union is taken over all ample divisors $A$. Thus, we have that
  $$\bm(f^\ast D)\subseteq \Supp(N).$$
  Therefore, we obtain the equality $\nnef(f^\ast D)=\Supp(N)$, and we deduce that $\nnef(D)=f(\Supp(N))$ since $\nnef(D)=f(\nnef(f^\ast D))$.

  Now if we assume that $P$ is semiample, then we have $\B(f^\ast D)\subseteq \Supp(N)$ and $\B(D)=\B(f^\ast D)$. Thus, we obtain the inclusion $\B(D)\subseteq f(\Supp(N))$. Consequently, we obtain the required result.
\end{proof}

In particular, under the semiampleness condition as in Proposition \ref{prop:loci coincide}, the restricted base locus is Zariski closed.

\begin{corollary}\label{cor:bm closed}
  Let $(X,\Delta)$ be a plc pair. If $(X,\Delta)$ admits a good $-(K_X+\Delta)$-minimal model, then we have $\nnef(-(K_X+\Delta))=\bm(-(K_X+\Delta))=\B(-(K_X+\Delta))$. In particular, $\bm(-(K_X+\Delta))$ is Zariski closed.
\end{corollary}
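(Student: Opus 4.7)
The plan is to apply Proposition \ref{prop:loci coincide} to $D=-(K_X+\Delta)$, which reduces the problem to producing a birational Zariski decomposition of $-(K_X+\Delta)$ whose positive part is semiample. Starting from a good $-(K_X+\Delta)$-minimal model $\varphi\colon X\dashrightarrow Y$, by definition $M\coleq\varphi_{\ast}(-(K_X+\Delta))$ is semiample on $Y$. I would then take a common resolution $(p,q)\colon W\to X\times Y$. Since $\varphi$ is a $-(K_X+\Delta)$-negative contraction, the definition in Section \ref{sect:prelim} yields
\[
p^{\ast}(-(K_X+\Delta)) \;=\; q^{\ast}M + E,
\]
where $E$ is an effective $q$-exceptional divisor whose support contains the strict transforms of every $\varphi$-exceptional divisor. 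The positive-part candidate $q^{\ast}M$ is semiample, being the pullback of a semiample divisor.

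The key step is to check that the above coincides with the divisorial Zariski decomposition $p^{\ast}(-(K_X+\Delta))=\pp(p^{\ast}(-(K_X+\Delta)))+\np(p^{\ast}(-(K_X+\Delta)))$. For any prime divisor $F$ on $W$, subadditivity of the asymptotic valuation combined with $\sigma_F(q^{\ast}M)=0$ (a consequence of the nefness of $q^{\ast}M$) gives
\[
\sigma_F\bigl(p^{\ast}(-(K_X+\Delta))\bigr) \;\le\; \mult_F E.
\]
For the reverse inequality, I would appeal to Nakayama's Lemma III.5.14, exactly as in the proof of Lemma \ref{lem:pot discrep}: since $q^{\ast}M$ is the pullback by $q$ of a nef divisor and $E$ is effective and $q$-exceptional, the asymptotic valuation of $p^{\ast}(-(K_X+\Delta))$ along $F$ is exactly $\mult_F E$. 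This identifies $E$ with $\np(p^{\ast}(-(K_X+\Delta)))$ and $q^{\ast}M$ with $\pp(p^{\ast}(-(K_X+\Delta)))$.

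With the birational Zariski decomposition at hand and its positive part semiample, Proposition \ref{prop:loci coincide} applied with $f=p$ immediately yields the equalities $\nnef(-(K_X+\Delta))=\bm(-(K_X+\Delta))=\B(-(K_X+\Delta))$; the Zariski closedness of $\bm(-(K_X+\Delta))$ then follows from that of $\B(-(K_X+\Delta))$, which is automatic from its definition as an intersection of supports of effective divisors. The main technical point is the identification of $E$ with $\np(p^{\ast}(-(K_X+\Delta)))$: if a direct appeal to Nakayama's Lemma III.5.14 is not immediate in this form, one can instead decompose $\varphi$ into its constituent divisorial contractions and flips and apply the same calculation inductively at each step, mirroring the bookkeeping in the proof of Lemma \ref{lem:pot discrep}.
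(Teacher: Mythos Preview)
Your proof is correct and follows the same approach as the paper: the paper's proof simply asserts that the existence of a good $-(K_X+\Delta)$-minimal model yields a birational Zariski decomposition with semiample positive part and then invokes Proposition~\ref{prop:loci coincide}, while you spell out the verification of that assertion via a common resolution and Nakayama's Lemma~III.5.14. One minor caveat: your fallback suggestion of decomposing $\varphi$ into divisorial contractions and flips is unnecessary (the direct appeal to Nakayama works) and would in any case require knowing that the minimal model is reached by an MMP, which is not part of the hypothesis.
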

\begin{proof}
  By assumption, there exists a birational contraction $\varphi\colon (X,\Delta)\dashrightarrow (Y,\Delta_Y)$ such that $-(K_Y+\Delta_Y)$ is semiample. Hence, one can see that $-(K_X+\Delta)$ admits a birational Zariski decomposition with semiample positive part. Thus, by Theorem \ref{prop:loci coincide}, we complete the proof.
\end{proof}



\begin{thebibliography}{ELMNP}

\bibitem[BH]{BH} C. Birkar and Z. Hu. \textit{Log canonical pairs with good augmented base loci}, Compos. Math., \textbf{150}, 579--592 (2014).

\bibitem[Bir]{Bir} C. Birkar, \textit{Ascending chain condition for log canonical thresholds and termination of log flips}, Duke Math. J., \textbf{136}(1), 173--180 (2007).

\bibitem[CD]{CD} S. Cacciola and L. Di Biagio, \textit{Asymptotic base loci on singular varieties}, Math. Z., \textbf{275}(1--2), 151--166 (2013).

\bibitem[CHLX]{CHLX} G. Chen, J. Han, J. Liu, and L. Xie, \textit{Minimal model program for algebraically integrable foliations and generalized pairs}, \href{https://arxiv.org/abs/2309.15823}{arXiv:2309.15823v2}, (2023).

\bibitem[CJ]{CJ} S. Choi and S. Jang, \textit{ACC of PLC threshold}, Manuscripta Math., \textbf{174}, 491--503 (2024).

\bibitem[CJK]{CJK} S. Choi, S. Jang, and D. Kim, \textit{Adjoint asymptotic multiplier ideal sheaves}, \href{https://arxiv.org/abs/2311.07441}{arXiv:2311.07441}, (2024).

\bibitem[CJL]{CJL} S. Choi, S. Jang, and D.-W. Lee, \textit{Plc pairs with good asymptotic base loci}, \href{https://arxiv.org/abs/2411.04628}{arXiv:2411.04628}, (2024).

\bibitem[CP]{CP} S. Choi and J. Park, \textit{Potentially non-klt locus and its applications}, Math. Ann., \textbf{366}(1), 141--166 (2016).

\bibitem[CT]{CT} G. D. Chen and N. Tsakanikas. \textit{On the termination of flips for log canonical generalized pairs}, Acta Math. Sin. Engl. Ser., \textbf{39}(6), 967--994 (2023).

\bibitem[ELMNP]{ELMNP-base loci} L. Ein, R. Lazarsfeld, M. Musta\c{t}\u{a}, M. Nakamaye, and M. Popa, \textit{Asymptotic invariants of base loci}, Ann. Inst. Fourier (Grenoble), \textbf{56}(6), 1701--1734 (2006).

\bibitem[HL]{HL} C. D. Hacon and J. Liu, \textit{Existence of flips for generalized lc pairs}, Camb. J. Math., \textbf{11}(4), 795--828 (2023).

\bibitem[Jan]{Jan} S. Jang, \textit{Anticanonical minimal models and Zariski decomposition}, \href{https://arxiv.org/abs/2405.10533}{arXiv:2405.10533}, (2024).

\bibitem[Les]{Les} J. Lesieutre, \textit{The diminished base locus is not always closed}, Compos. Math., \textbf{150}(10), 1729--1741 (2014).

\bibitem[Nak]{Nak} N. Nakayama, (2004). \textit{Zariski-decomposition and abundance} (Vol. 14). Tokyo: Mathematical Society of Japan.

\bibitem[TX]{TX} N. Tsakanikas and L. Xie, \textit{Remarks on the existence of minimal models of log canonical generalized pairs}, Math. Z., \textbf{307}(1), 1--39 (2024).

\bibitem[TX2]{TX2} N. Tsakanikas and Z. Xie, \textit{Comparison and uniruledness of asymptotic base loci}, \href{https://arxiv.org/abs/2309.01031}{\nolinkurl{arXiv:2309.01031v2}}, (2024).

\end{thebibliography}
\end{document}